\newtheorem{lem}{Lemma}[section]
\newtheorem{thm}[lem]{Theorem}
\newtheorem{rem}[lem]{Remark}
\newtheorem{cor}[lem]{Corollary}
\newtheorem{prop}[lem]{Proposition}
\def \NN{\mathbb{N}}
\def \RR{\mathbb{R}}
\def \Rd{{\RR^d}}
\newcommand{\ind}{{\bf 1 }}
\newcommand{\Kz}{K}
\title[Sharp Gaussian estimates]{Characterization of sharp global Gaussian estimates for Schr{\"o}dinger heat kernels}
\subjclass[2010]{Primary 47D06, 47D08; Secondary 35A08, 35B25}
\keywords{4G inequality, Schr\"odinger perturbation, subordinator, fundamental solution}
\author{Krzysztof Bogdan}
 \address{Wroc{\l}aw University of Science and Technology,
Wybrze{\.z}e Wyspia{\'n}skiego 27,
50-370 Wroc{\l}aw, Poland}
\email{bogdan@pwr.edu.pl}
\author{Jacek Dziuba{\'n}ski}
\address{University of Wroc{\l}aw,
Pl. Grunwaldzki 2/4,
50-384 Wroc{\l}aw,
Poland}
\email{Jacek.Dziubanski@math.uni.wroc.pl}
\author{Karol Szczypkowski}
\address{Universit{\"a}t Bielefeld, Postfach 10 01 31,
D-33501 Bielefeld, Germany  and
Wroc{\l}aw University of Science and Technology,
Wybrze{\.z}e Wyspia{\'n}skiego 27,
50-370 Wroc{\l}aw, Poland
}
\email{karol.szczypkowski@math.uni-bielefeld.de, karol.szczypkowski@pwr.edu.pl}
\date{\today}
\keywords{Schr\"odinger equation, fundamental solution, sharp Gaussian estimates}
\thanks{Jacek Dziuba{\'n}ski was supported by the Polish National Science Center
(Narodowe Centrum Nauki) grant DEC-2012/05/B/ST1/00672.}  
\begin{document}

\begin{abstract}
We investigate when the fundamental solution of the Schr\"o\-dinger equation $\partial_t=\Delta+V$ posseses sharp Gaussian bounds global in space and time. We give a characterization for $V\leq 0$
and
a sufficient condition
for general $V$. 
\end{abstract}

\maketitle

\section{Introduction and main results}\label{sec:i}

Let $d\in\NN$. For $x,y\in\Rd$ and $t>0$ we consider the Gaussian kernel
$$g(t,x,y)=g(t,y-x)=(4\pi t)^{-d/2} e^{-|y-x|^2/(4t)}.$$  
It is the fundamental solution of  the heat equation $\partial_t=\Delta$. 
For a 
function $V\colon\Rd\to\RR$
we let $G$
be 
the fundamental solution of $\partial_t=\Delta+V$, determined by the following Duhamel or perturbation formula for $t>0$, $x,y\in \Rd$,
\[
G(t,x,y)=g(t,x,y)+\int_0^t \int_\Rd G(s,x,z)V(z)g(t-s,z,y)dzds.
\]
We aim at the {\it sharp global Gaussian bounds} of $G$, which mean that there are numbers $0< c_1\le 1 \le c_2$ such that
\begin{align}\label{est:sharp_uni}
c_1  \leq \frac{G(t,x,y)}{g(t,x,y)}\leq c_2\,, \quad t>0,\ x,y\in\Rd.
\end{align}
Clearly, \eqref{est:sharp_uni} implies the plain global Gaussian bounds, which only require numbers $0<\varepsilon_1, c_1 \leq 1\leq \varepsilon_2, c_2 <\infty$ such that for all $t>0$ and $x,y\in\Rd$,
\begin{align}\label{est:gaus}
c_1\, (4\pi t)^{-d/2} e^{-\frac{|y-x|^2}{4t\varepsilon_1}} \leq G(t,x,y)\leq c_2\, (4\pi t)^{-d/2} e^{-\frac{|y-x|^2}{4t\varepsilon_2}}.
\end{align}
To characterize \eqref{est:sharp_uni} we let
\begin{align*}
S(V,t,x,y)=\int_0^t \int_{\Rd} \frac{g(s,x,z)g(t-s,z,y)}{g(t,x,y)}|V(z)|\,dzds\,, \quad t>0,\ x,y\in \Rd.
\end{align*}
This will often be abbreviated to $S(V,t)$, $S(V)$ or $S$, and we always assume that $V$ is Borel measurable.
Denote, as usual,
\begin{equation*}\label{def:sSbi}
\|S(V) \|_{\infty}=\sup_{t>0,\,x,y\in\Rd} S(V,t,x,y).
\end{equation*}
The results of Bogdan, Hansen and Jakubowski \cite{MR2457489} and Zhang \cite{MR1978999}  give enough evidence in favor of using $S(V)$ 
in this and more general contexts.

We will say that $V$ has bounded potential for bridges globally in time, if $\|S(V) \|_{\infty}<\infty$, 
in which case
we can largely resolve \eqref{est:sharp_uni} thanks to the following 
folklore
result.
\begin{lem}\label{lem:gen_neg}
If $\eta:=\|S(V^+)\|_\infty<1$
and $S(V^-)$ is locally bounded,
then
\begin{align}\label{gen_est}
e^{-S(V^-,t,x,y)} \leq \frac{G(t,x,y)}{g(t,x,y)}\leq \frac{1}{1-\eta}, \qquad t>0, \ x,y\in \Rd \,.
\end{align}
If $V\leq 0$, then \eqref{est:sharp_uni} holds if and only if $\|S(V)\|_\infty<\infty$.
If $V\geq 0$, then \eqref{est:sharp_uni} implies $\|S(V)\|_\infty<\infty$.
\end{lem}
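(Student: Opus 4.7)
My plan is to treat the three assertions in order, relying throughout on iterating the Duhamel identity for $G$ and on the probabilistic interpretation of the resulting series in terms of Brownian bridges.

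For the upper bound in \eqref{gen_est}, I would first replace $V$ by $V^+$ and iterate the perturbation formula to obtain $G^+(t,x,y)=\sum_{n\geq 0} g_n(t,x,y)$, where $g_0=g$ and $g_n$ is the $n$-fold time--space Duhamel convolution with $V^+$. Using the chain rule for bridge densities,
\[
\frac{g_n(t,x,y)}{g(t,x,y)}=\frac{1}{n!}\,\mathbb{E}^{x\to y,t}\!\!\left[\left(\int_0^t V^+(B_s)\,ds\right)^{\!n}\right],
\]
where $\mathbb{E}^{x\to y,t}$ denotes expectation along the $x\to y$ Brownian bridge of length $t$. A pointwise Khasminskii-type induction in $n$, using the uniform bound $S(V^+)\leq\eta$ at each step, then gives $g_n/g\leq \eta^n$, so $G^+/g\leq 1/(1-\eta)$. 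The general case follows because $G\leq G^+$; this comparison is transparent from the Feynman--Kac representation used next.

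For the lower bound, rearranging the (now absolutely convergent) full perturbation series yields the Feynman--Kac identity
\[
\frac{G(t,x,y)}{g(t,x,y)}=\mathbb{E}^{x\to y,t}\!\!\left[\exp\!\left(\int_0^t V(B_s)\,ds\right)\right].
\]
Since $e^{\int V^+}\geq 1$ pointwise along paths, $G/g\geq \mathbb{E}^{x\to y,t}[\exp(-\int_0^t V^-(B_s)ds)]$, and Jensen's inequality applied to the convex function $u\mapsto e^{-u}$ gives the desired lower bound $e^{-S(V^-,t,x,y)}$. Local boundedness of $S(V^-)$ is what rigorously justifies absolute convergence and the Feynman--Kac identity above.

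The remaining ``only if'' directions follow directly from Duhamel. If $V\leq 0$ and \eqref{est:sharp_uni} holds, then
\[
g(t,x,y)-G(t,x,y)=\int_0^t\!\!\int_\Rd G(s,x,z)\,|V(z)|\,g(t-s,z,y)\,dz\,ds\geq c_1\,g(t,x,y)\,S(|V|,t,x,y),
\]
while the left side is at most $(1-c_1)g(t,x,y)$, so $S(|V|,t,x,y)\leq (1-c_1)/c_1$. The case $V\geq 0$ is symmetric: $G-g\geq c_1\,g\,S(V,t,x,y)$ while $G-g\leq (c_2-1)g$, hence $S(V,t,x,y)\leq (c_2-1)/c_1$. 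Conversely, when $V\leq 0$ and $\|S(|V|)\|_\infty<\infty$, applying \eqref{gen_est} with $\eta=0$ immediately delivers \eqref{est:sharp_uni} with $c_2=1$ and $c_1=e^{-\|S(|V|)\|_\infty}$. The main technical obstacle is the pointwise bridge version of Khasminskii's inequality: unlike the classical statement, which is over free Brownian motion, here the induction must be carried out pointwise in $(t,x,y)$, which is precisely what the quantity $S(V,t,x,y)$ is designed for; once this is set up, everything else is routine manipulation of the Duhamel identity.
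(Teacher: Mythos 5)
Your proposal is mathematically sound and, modulo one imprecision noted below, it reproduces what the cited sources do, only phrased probabilistically (Brownian bridges and Feynman--Kac) where the paper and its references \cite{2015arXiv151107167B, MR2457489, MR3000465} phrase things analytically in terms of the iterated Duhamel series. Note that the paper does not actually prove the first two assertions of this lemma in-line; it only remarks that the last assertion ``easily follows from Duhamel formula'' and defers the rest to \cite[Lemmas~1.1--1.2]{2015arXiv151107167B}. Your Duhamel computations for the two ``only if'' directions are exactly the intended argument, and your bridge-Khasminskii induction $g_n/g\le\eta^n$ is the probabilistic face of the iterated bound $g_n(t,x,y)\le\eta^{\,n-1}\int_0^t\!\int g(s,x,z)V^+(z)g(t-s,z,y)\,dz\,ds\le\eta^n g(t,x,y)$ used in those references; similarly, the lower bound via $\mathbb{E}[e^{-\int V^-}]\ge e^{-S(V^-,t,x,y)}$ is Jensen on the bridge, which is the probabilistic form of the analytic inequality $\sum_n(-1)^n\frac{1}{n!}\mathbb{E}[X^n]\ge e^{-\mathbb{E}[X]}$ in the cited lemmas.

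One point deserves a correction: the phrase ``the (now absolutely convergent) full perturbation series'' overstates what the hypotheses give you. The conditions $\|S(V^+)\|_\infty<1$ and $S(V^-)$ locally bounded do \emph{not} force absolute convergence of $\sum_n g_n$ for the signed $V$ (that would essentially require control of $S(|V|)$, not of $S(V^+)$ alone). What holds is that $\int_0^t|V|(B_s)\,ds<\infty$ a.s.\ on each bridge, so $\exp\bigl(\int_0^t V(B_s)\,ds\bigr)$ is well-defined and integrable by domination ($\exp(\int V^+)$ has bounded expectation by your Khasminskii step, and $\exp(-\int V^-)\le 1$); the identification of $G$ with this Feynman--Kac expression is then justified by monotone truncation of $V^-$, not by absolute convergence of the series. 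Also, $G\le G^+$ should be argued from this same monotone approximation (or from the perturbation identity $G=G^+-G\,V^-\,G^+$) rather than treated as transparent. With those two clarifications the argument is complete and matches the referenced proof in substance.
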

Here, as usual, $V^+=\max(0,V)$ and $V^-=\max(0,-V)$.
The last statement of the lemma easily follows from Duhamel formula.
The rest of the lemma is an excerpt from \cite[Lemma~1.1 and Lemma~1.2]{2015arXiv151107167B}, where it is proved based on
\cite{MR2457489, MR3000465}.
We note that $S(V)=\infty$ 
for every nontrivial $V$ in dimensions
$d = 1$ and $2$,
see, e.g., \cite[Lemma~1.3]{2015arXiv151107167B}, and so \eqref{est:sharp_uni} is impossible for nontrivial $V\ge 0$ and nontrivial $V\le 0$ in these dimensions.
To characterize the boundedness of
$S(V)$,
for $d\geq 3$ and $x,y\in \Rd$ we define
\begin{align}\label{Kernel}
\Kz (x,y)= \frac{e^{-
\left(|x||y|-x\cdot y \right)/2}}{|x|^{d-2}} \left(1+|x||y| \right)^{d/2-3/2}\,,
\end{align}
where $x\cdot y$ is the usual scalar product, and we let
\begin{eqnarray*}
\Kz  (V,x,y)&=&\int_{\Rd} |V(z)|\Kz (z-x,y)\,dz\,.
\end{eqnarray*}
We also denote 
$$\|V \|_{\Kz }= \|
\Kz (V)\|_\infty\,.$$ 
Here is our main result.
\begin{thm}\label{thm:J_0}
There are constants 
$M_1$, $M_2$  depending only on $d$, such that
\begin{equation}\label{eq:cSK0}
M_1 \| V \|_{\Kz } \leq 
\|S(V)\|_{\infty}
\leq M_2 \| V \|_{\Kz }\,.
\end{equation}
\end{thm}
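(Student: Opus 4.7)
The plan is to reduce Theorem \ref{thm:J_0} to a sharp pointwise bound between the bridge occupation density and the kernel $\Kz$. Define
$$h(t,x,z,y):=\int_0^t\frac{g(s,x,z)\,g(t-s,z,y)}{g(t,x,y)}\,ds,$$
so $S(V,t,x,y)=\int_{\Rd}|V(z)|\,h(t,x,z,y)\,dz$. The target is the kernel-level comparison
$$\sup_{t>0}\,h(t,x,z,y)\asymp \Kz(z-x,y-x),$$
with constants depending only on $d$, from which \eqref{eq:cSK0} follows by integrating against $|V|$ and taking suprema.

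The main computation begins with the standard completing-the-square identity
$$-\frac{|z-x|^2}{4s}-\frac{|y-z|^2}{4(t-s)}+\frac{|y-x|^2}{4t}=-\frac{|(z-x)-u(y-x)|^2}{4tu(1-u)},\qquad u=\tfrac{s}{t}.$$
After translating $x\mapsto 0$ and substituting $u=s/t$ this yields the explicit representation
$$h(t,0,z,y)=(4\pi)^{-d/2}\,t^{1-d/2}\int_0^1\bigl(u(1-u)\bigr)^{-d/2}\exp\!\left(-\frac{|z-uy|^2}{4tu(1-u)}\right)du.$$
I would then analyze this integral by Laplace's method. The exponent is minimized in $u\in[0,1]$ at the projection point $u^\ast=\min\{1,\max\{0,z\cdot y/|y|^2\}\}$, where its value is essentially $(|z||y|-z\cdot y)/(\text{scale depending on }t)$. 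Around $u^\ast$ the factor $(u(1-u))^{-d/2}$ is approximately $(u^\ast(1-u^\ast))^{-d/2}$, and the Gaussian peak has width $\sqrt{t\,u^\ast(1-u^\ast)/|y|^2}$. Multiplying out and matching powers produces exactly the $|z|^{-(d-2)}$ singularity at $z=0$ and the $(1+|z||y|)^{d/2-3/2}$ transition factor that appear in $\Kz(z,y)$. The exponential factor $e^{-(|z||y|-z\cdot y)/2}$ arises by choosing $t^\ast$ of the order that makes the exponent at $u^\ast$ of order one, after which the Laplace estimate is saturated; this gives the lower bound $h(t^\ast,0,z,y)\ge c_d\,\Kz(z,y)$. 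For the upper bound $h(t,0,z,y)\le C_d\,\Kz(z,y)$, uniform in $t$, I would treat separately the interior contribution (Laplace peak at $u^\ast$) and the endpoint contributions $u\to 0,1$, where $(u(1-u))^{-d/2}$ is singular but tempered by the Gaussian factor $e^{-|z|^2/(4tu)}$ (resp.\ $e^{-|y-z|^2/(4t(1-u))}$); bounding these endpoint contributions by integral representations of $|z|^{-(d-2)}$ and $|y-z|^{-(d-2)}$ and reabsorbing them into $\Kz$ completes the upper bound.

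The upper kernel estimate immediately gives $S(V,t,x,y)\le C_d\,\Kz(V,x,y)$, hence the right half of \eqref{eq:cSK0}. The left half is more delicate: the lower bound is pointwise in $(z,y)$ but the optimizing $t^\ast$ depends on $z$, whereas $\|S(V)\|_\infty$ only supplies a single $t$ for each $(x,y)$. I would overcome this by showing that $\|V\|_{\Kz}=\sup_{x,y}\Kz(V,x,y)$ is, up to a constant factor, realised on a single dyadic piece of $V$ on which $t^\ast$ is essentially constant; equivalently, after a translation/dilation reduction one may assume $|y-x|$ fixed and $|V|$ supported in a single annulus around $x$, and then apply the pointwise lower bound uniformly. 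Taking $t$ equal to this common $t^\ast$ gives $S(V,t,x,y)\ge c_d\,\Kz(V,x,y)$ on that piece, and hence the left inequality in \eqref{eq:cSK0}.

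The hard part is this last passage from pointwise kernel bounds to the $L^\infty$ comparison, i.e.\ handling the $z$-dependence of $t^\ast$ in the lower bound. A secondary technical difficulty is the careful Laplace/endpoint book-keeping that produces the precise exponent $d/2-3/2$ in $(1+|z||y|)^{d/2-3/2}$ and a clean matching of constants, especially in the borderline regime $|z||y|\asymp 1$ where neither the "Newtonian" nor the "aligned exponential" regime dominates.
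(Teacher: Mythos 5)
There is a genuine gap, and it is located exactly where you flag the ``hard part''. Your central claim --- a two-sided pointwise bound $\sup_{t>0}h(t,x,z,y)\asymp \Kz(z-x,y-x)$, and in particular the uniform-in-$t$ upper bound $h(t,x,z,y)\le C_d\,\Kz(z-x,y-x)$ --- is false for $d\ge 4$. Take $x=0$, $|y|=R$ and $z=y/2$: from your own representation, restricting to $|u-1/2|\le \sqrt t/R$ gives
\begin{equation*}
h(t,0,y/2,y)\;\ge\; c_d\, t^{1-d/2}\cdot \frac{\sqrt t}{R}\;=\;c_d\,\frac{t^{(3-d)/2}}{R}\;\longrightarrow\;\infty \quad (t\to 0^+,\ d\ge 4),
\end{equation*}
while $\Kz(y/2,y)=(R/2)^{2-d}(1+R^2/2)^{(d-3)/2}<\infty$. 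So the kernel of $S$ blows up along the segment as $t\to0$, and the right-hand inequality of \eqref{eq:cSK0} cannot be obtained by a pointwise-in-$z$, uniform-in-$t$ kernel bound; it is rescued only because the blow-up region is a tube of radius $\sim\sqrt t$, i.e.\ the sup over $t$ and the integral over $z$ do not commute. Relatedly, the second argument of $\Kz$ in the paper is not the displacement $y-x$ but a drift: the correct object is $J(z-x,w)=\int_0^\infty\tau^{-d/2}e^{-|z-x-\tau w|^2/(4\tau)}d\tau$ with $w=(y-x)/t$, and $J\approx\Kz$ by the inverse-Gaussian estimate of Theorem~\ref{thm:est2}.

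The lower bound has the same structural problem, and your proposed remedy does not work. You suggest that $\|V\|_{\Kz}$ is realized, up to constants, on a single dyadic piece of $V$ on which the optimizing time $t^\ast$ is essentially constant; this is false in general. For the $V$ of Proposition~\ref{thm:dist} one has $\|V\|_{\Kz}=\infty$ although every dyadic truncation (bounded, compactly supported) has finite $\Kz$-norm: the divergence comes from summing contributions of infinitely many scales, so no single piece captures the norm. The paper avoids both difficulties at once: after replacing $S$ by Zhang's functional $N$ (inequalities \eqref{L}, \eqref{U}) and dropping half of $N$ (Lemma~\ref{lem:upr}), it reparametrizes $(t,x,y)\mapsto (y,w)$ with $w=(y-x)/t$ and takes the supremum over $t$ \emph{with $w$ fixed} (so $x=y-tw$ moves to infinity), which turns the time-truncated integral into the full integral $\int_\Rd J(z-y,w)|V(z)|\,dz$ (Lemma~\ref{lem:J_0}); the single remaining computation is $J\approx \Kz$ via Theorem~\ref{thm:est2}. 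Without this drift reparametrization --- i.e.\ with the endpoint $y$ held fixed as in your plan --- neither the uniform upper kernel bound nor the passage from the $z$-dependent $t^\ast$ to a single $t$ can be justified, so both halves of \eqref{eq:cSK0} remain unproved in your argument.
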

Here by constants we mean positive numbers. The proof of Theorem~\ref{thm:J_0} is given in Section~\ref{sec:proofs}.
In view of \eqref{eq:cSK0} and of the second and the third statements of Lemma~\ref{lem:gen_neg}, the condition $\|V\|_{\Kz}<\infty$ may replace $\|S(V)\|_{\infty}<\infty$ in characterizing \eqref{est:sharp_uni}, which will be often used without mention. 
Similarly, sufficient smallness of $\|V\|_K$ yields \eqref{gen_est} in view of the first statement of Lemma~\ref{lem:gen_neg}.

\begin{cor}\label{cor:sGbbyK}
If $V\leq 0$, then \eqref{est:sharp_uni} holds if and only if $\Kz (V)$ is bounded.
\end{cor}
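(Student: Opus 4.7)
The plan is to deduce the corollary immediately by chaining the second statement of Lemma~\ref{lem:gen_neg} with the two-sided bound of Theorem~\ref{thm:J_0}. No new analysis is required; this is a pure combination of previously stated results.

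More precisely, I would first invoke Lemma~\ref{lem:gen_neg}, which tells us that for $V\le 0$ the sharp global Gaussian bound \eqref{est:sharp_uni} is equivalent to $\|S(V)\|_\infty<\infty$. Then I would apply Theorem~\ref{thm:J_0} to translate this into a condition on $\Kz(V)$: since $M_1\|V\|_\Kz \le \|S(V)\|_\infty\le M_2\|V\|_\Kz$ with $M_1,M_2>0$, the finiteness of $\|S(V)\|_\infty$ is equivalent to the finiteness of $\|V\|_\Kz=\|\Kz(V)\|_\infty$, i.e.\ to the boundedness of the function $\Kz(V)$ on $\Rd$. Concatenating the two equivalences yields the claim.

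The only point that deserves a brief remark is that Lemma~\ref{lem:gen_neg}, as stated, also requires $S(V^-)=S(V)$ to be locally bounded in order to get the lower estimate in \eqref{gen_est}; but this is automatic once $\|V\|_\Kz<\infty$, since the pointwise control of $S(V,t,x,y)$ by $M_2\|V\|_\Kz$ afforded by Theorem~\ref{thm:J_0} is in particular a local (indeed global) bound. Thus there is no hidden obstacle; the corollary is a direct corollary in the strict sense, and the substantive work has already been carried out in Lemma~\ref{lem:gen_neg} and Theorem~\ref{thm:J_0}.
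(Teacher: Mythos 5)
Your proof is correct and matches the paper's own (implicit) argument: the paper derives the corollary precisely by combining the second statement of Lemma~\ref{lem:gen_neg} with the two-sided bound~\eqref{eq:cSK0} of Theorem~\ref{thm:J_0}. Your closing remark about local boundedness of $S(V^-)$ is harmless but unnecessary, since the second statement of Lemma~\ref{lem:gen_neg} is a self-contained equivalence for $V\le 0$ and does not invoke the hypotheses of the first statement.
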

Compared 
with
$S(V)$, 
$\Kz (V)$ is easier to investigate, because $K(V)$ has one argument less than $S(V)$. This
leads to considerable progress in analysis of \eqref{est:sharp_uni}, which we now present. 
For $d\geq 3$ we let $C_d=\Gamma(d/2-1)/(4\pi^{d/2})$ and
\begin{align*}
-\Delta^{-1} V(x)
= \int_0^{\infty}\int_{\Rd}g(t,x,z) V(z)\,dzdu
=C_d\int_{\Rd} \frac{1}{|z-x|^{d-2}}V(z)\,dz\,.
\end{align*} 
For $d=3$ the formula for $\Kz $ simplifies and we easily obtain
\begin{align}\label{eq:d_3}
\|V\|_{\Kz }= C_{d}^{-1}\, \|\Delta^{-1} |V| \|_{\infty}\,,
\end{align}
thus 
$\|\Delta^{-1} |V|\|_{\infty}$
resolves \eqref{est:sharp_uni} in the same way as $\|S(V)\|_{\infty}$.
For instance, if $d=3$ and $V\leq 0$, then
the sharp global Gaussian bounds \eqref{est:sharp_uni} are equivalent to the condition 
$\|\Delta^{-1} V \|_{\infty}<\infty$.
This equivalence was first proved by Milman and Semenov \cite[Remark~(3) on p. 4]{MR1994762}. 

The main focus of the present paper is on the case of $d\ge 4$. Let 
$$\|V \|_{d/2}=\left(\int_\Rd |V(z)|^{d/2}dz\right)^{2/d}.$$
\begin{prop}\label{thm:D_Ld/2}
If $d\geq 4$, then 
\begin{equation}\label{eq:KV}
C_d^{-1}\|\Delta^{-1}|V|\|_{\infty}\le
\|V\|_{\Kz } \leq 2^{(d-3)/2}\Big( C_d^{-1}\|\Delta^{-1}|V|  \|_{\infty} + \kappa_d \|V \|_{d/2}\Big).
\end{equation}
\end{prop}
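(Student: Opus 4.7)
The plan is to prove the two inequalities in \eqref{eq:KV} separately. The lower bound is obtained by specialization: setting $y=0$ in the definition of $\Kz(V,x,y)$ makes $|z-x||y|-(z-x)\cdot y = 0$ and $(1+|z-x||y|)^{(d-3)/2}=1$, so $\Kz(z-x,0)=|z-x|^{-(d-2)}$ and hence $\Kz(V,x,0)= C_d^{-1}(-\Delta^{-1}|V|)(x)$. Taking the supremum over $x$ then gives $\|V\|_{\Kz}\geq C_d^{-1}\|\Delta^{-1}|V|\|_\infty$.

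For the upper bound I plan to use the elementary inequality $(1+a)^p \leq 2^p(1+a^p)$, valid for $a,p\geq 0$, with $p=(d-3)/2$ and $a=|z-x||y|$. This yields
\begin{equation*}
\Kz(z-x, y) \leq 2^{(d-3)/2} \left( \frac{e^{-(|z-x||y|-(z-x)\cdot y)/2}}{|z-x|^{d-2}} + \frac{|y|^{(d-3)/2}\, e^{-(|z-x||y|-(z-x)\cdot y)/2}}{|z-x|^{(d-1)/2}} \right).
\end{equation*}
The first summand, after dropping the exponential (which is $\leq 1$), integrates against $|V|$ to yield $C_d^{-1}(-\Delta^{-1}|V|)(x)$. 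For the second, denoted $h(z)$, I would apply H\"older's inequality with conjugate exponents $d/2$ and $d/(d-2)$:
\begin{equation*}
\int |V(z)|\, h(z)\, dz \leq \|V\|_{d/2}\, \left( \int h(z)^{d/(d-2)}\, dz \right)^{(d-2)/d}.
\end{equation*}
The key computational observation is that after the substitution $u = |y|(z-x)$ the powers of $|y|$ cancel exactly, so
\begin{equation*}
\int h^{d/(d-2)}\, dz = \kappa_d^{d/(d-2)} := \int_{\Rd} \frac{e^{-\frac{d}{2(d-2)}(|u|- u\cdot e)}}{|u|^{d(d-1)/(2(d-2))}}\, du,
\end{equation*}
with $e$ any fixed unit vector (the integrand is rotationally symmetric about the axis $e$, so the choice is immaterial). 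Assembling the two contributions and taking the supremum over $x,y$ produces the upper bound in \eqref{eq:KV}.

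The main obstacle will be verifying that $\kappa_d<\infty$. Near the origin the singularity $|u|^{-d(d-1)/(2(d-2))}$ is integrable in $\Rd$ because the exponent is strictly less than $d$ whenever $d\geq 4$ (equivalently, $(d-1)/(2(d-2))<1$). At infinity the exponential provides no decay along the direction of $e$, so I would pass to spherical coordinates and use the standard Laplace-type estimate $\int_{S^{d-1}} e^{-c r(1-\cos\theta)}\,d\sigma = O(r^{-(d-1)/2})$; combined with the radial factor $r^{d-1-\alpha}$ this reduces integrability at infinity to that of $r^{-(d-1)/(d-2)}$, which holds since $(d-1)/(d-2)>1$. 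The remaining steps are routine bookkeeping around the splitting and the change of variables.
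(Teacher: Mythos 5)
Your argument is correct, and it is in essence the paper's proof: the lower bound by setting $y=0$ so that $\Kz(z-x,0)=|z-x|^{-(d-2)}$, and the upper bound by isolating a $|z-x|^{2-d}$ piece (controlled by $\|\Delta^{-1}|V|\|_\infty$) plus an anisotropic piece of the form $|y|^{(d-3)/2}e^{-(|z-x||y|-(z-x)\cdot y)/2}|z-x|^{-(d-1)/2}$, which is handled by H\"older with exponents $d/2$ and $d/(d-2)$ together with the exact cancellation of the powers of $|y|$ under the dilation $u=|y|(z-x)$. The one genuine (minor) difference is the splitting mechanism: the paper splits the \emph{domain} at $|z-x||y|=1$, bounding $(1+|z-x||y|)^{(d-3)/2}$ by $2^{(d-3)/2}$ or by $2^{(d-3)/2}(|z-x||y|)^{(d-3)/2}$ on the respective pieces, so its $\kappa_d$-integral runs only over $\{|w|>1\}$ and its finiteness is exactly Lemma~\ref{lem:const}; you instead split the \emph{kernel} pointwise via $(1+a)^{p}\le 2^{p}(1+a^{p})$, so your $\kappa_d$-integral runs over all of $\Rd$ and you must additionally check integrability at the origin, which is precisely where the hypothesis $d\ge 4$ enters your proof (the exponent $d(d-1)/(2(d-2))<d$ fails at $d=3$). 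Your tail estimate via the Laplace-type bound $\int_{S^{d-1}}e^{-cr(1-\cos\theta)}\,d\sigma=O(r^{-(d-1)/2})$ is a sound substitute for the paper's Lemma~\ref{lem:const}. The net effect is that your $\kappa_d$ is a somewhat larger constant than the one the paper specifies, which is harmless since the statement only asserts the inequality with some constant depending on $d$; you may also want to note explicitly that the case $y=0$ is trivial (your second summand vanishes and the dilation degenerates there), as the paper does.
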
 
The result is an analogue of \cite[Corollary~1]{MR1642818}.
In Section~\ref{sec:proofs} we give the proof and specify the constant $\kappa_d$.
As a consequence,
$\|\Delta^{-1}V\|_{\infty}<\infty$ is necessary for 
\eqref{est:sharp_uni} if $V\le 0$ and if $V\ge 0$, cf. Lemma~\ref{lem:gen_neg}.
On the other hand for every $d\geq 3$
there is $V\leq 0$ such that 
$\|V\|_{\Kz}<\infty$, i.e., \eqref{est:sharp_uni} holds,
but
$V\notin L^1(\Rd)\cup \bigcup_{p>1}L^p_{loc}(\Rd)$, in particular $\|V \|_{d/2}=\infty$, see 
\cite{2015arXiv151107167B}. 

A long-standing open problem on \eqref{est:sharp_uni} for $V\le 0$ posed by Liskevich and Semenov \cite[p. 602]{MR1642818} reads as follows: ``The validity of the two-sided estimates for the case $d>3$ without the additional assumption $V\in L^{d/2}$ is an open question.'' 
The question is whether $\|V\|_{\Kz}$ and $\|\Delta^{-1}V\|_{\infty}$ are comparable for $d>3$.
It turns out that the answer is negative, as follows.
\begin{prop}\label{thm:dist}
Let $d\geq 4$. 
For $\mathbf z=(z_1, z_2,\ldots,z_d)\in\Rd$ we write $\mathbf z=(z_1,\mathbf z_2)$, 
where $\mathbf z_2=(z_2,\ldots,z_d)\in \RR^{d-1}$. 
We define
\begin{eqnarray*}
A&=&\{(z_1,\mathbf z_2)\in \Rd : \ z_1>4, \ |\mathbf z_2|\leq \sqrt{z_1}\}, \quad \mbox{ and }\\
V(z_1,\mathbf z_2)&=&-\frac{1}{z_1} \ind_{A}(z_1,\mathbf z_2).
\end{eqnarray*} 
Then $\| \Delta^{-1} V \|_{\infty}<\infty$ and 
$\|V\|_{\Kz }=\infty$.
There even is function $V \le  0$ with compact support and such that $\|\Delta^{-1}V\|_{\infty}<\infty$ but $\|V\|_{\Kz }=\infty$.
\end{prop}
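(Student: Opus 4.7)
First I would verify $\|\Delta^{-1}V\|_\infty<\infty$ for the specified $V$ via dyadic decomposition of the cusp. Setting $A_k=A\cap\{z_1\in[2^k,2^{k+1}]\}$ and fixing $x\in\Rd$, I would single out the ``relevant'' shell $k_0$ with $2^{k_0}\sim\max(4,|x_1|)$ and estimate $\int_{A_{k_0}} dz/(z_1|z-x|^{d-2})$ by a further sub-split of $z_1-x_1$ and of $\mathbf z_2$ (using $|\mathbf z_2|\le 2^{k/2}$ on $A_k$); the remote shells contribute a summable tail through $|z-x|\gtrsim 2^{\max(k,k_0)}$ and $\mathrm{vol}(A_k)\sim 2^{k(d+1)/2}$. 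To prove $\|V\|_\Kz=\infty$, I would compute $\Kz(V,0,e_1)$ directly: on $A$ one has $|z|-z_1=|\mathbf z_2|^2/(|z|+z_1)\le 1/2$ and $|z|\sim z_1$, so $\Kz(z,e_1)\asymp z_1^{-(d-1)/2}$, and integrating against $|V|=1/z_1$ over the cross-section $|\mathbf z_2|\le\sqrt{z_1}$ of $(d-1)$-dimensional volume $\sim z_1^{(d-1)/2}$ reduces the integral to $\int_4^\infty dz_1/z_1=\infty$.

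For the compactly supported counterexample, I would form a sum of scaled, translated, and truncated copies of $V$ accumulating at a point. Set $V^{(n)}=V\cdot\mathbf 1_{\{z_1\le 2^n\}}$; truncating the computation above at $z_1=2^n$ gives $\|V^{(n)}\|_\Kz\ge \Kz(V^{(n)},0,e_1)\gtrsim n$, while $\|\Delta^{-1}|V^{(n)}|\|_\infty\le\|\Delta^{-1}|V|\|_\infty$ uniformly in $n$. Both $\|\cdot\|_\Kz$ and $\|\Delta^{-1}|\cdot|\|_\infty$ are invariant under the scaling $V_\lambda(z)=\lambda^{-2}V(z/\lambda)$ and under translation. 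Pick $\lambda_n=2^{-2n-10}$ and $p_n=2^{-n}e_1$, put $\widetilde V_n(z)=\lambda_n^{-2}V^{(n)}((z-p_n)/\lambda_n)$, and let $\widetilde V=\sum_{n\ge 1}\widetilde V_n$. The supports are closed sets of diameter $\sim 2^{-n-10}$ around $p_n$, mutually disjoint, accumulating at the origin and contained in a bounded region, so $\widetilde V$ has compact support; and $\|\widetilde V\|_\Kz\ge\|\widetilde V_n\|_\Kz=\|V^{(n)}\|_\Kz\to\infty$ yields $\|\widetilde V\|_\Kz=\infty$.

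The main technical obstacle is verifying $\|\Delta^{-1}|\widetilde V|\|_\infty<\infty$ for the infinite sum. For $x$ inside the $n$-th piece the self-contribution $\Delta^{-1}|\widetilde V_n|(x)$ is bounded by a universal constant (scale invariance together with the first part), and the cross terms satisfy $\sum_{m\ne n}\Delta^{-1}|\widetilde V_m|(x)\lesssim\sum_{m\ne n}\|\widetilde V_m\|_1/|p_n-p_m|^{d-2}$. A direct calculation gives $\|\widetilde V_m\|_1=\lambda_m^{d-2}\|V^{(m)}\|_1\sim 2^{m(7-3d)/2}$ (using $\|V^{(m)}\|_1\sim 2^{m(d-1)/2}$) and $|p_n-p_m|\gtrsim 2^{-\min(n,m)-1}$, so the tail terms are of order $2^{m(3-d)/2}$ for $m<n$ and $2^{(m-n)(7-3d)/2+n(3-d)/2}$ for $m>n$: both are geometric and sum to a constant uniformly in $n$ when $d\ge 4$. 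This balancing of scaling factors, truncation levels, and positions, so that each $\widetilde V_n$ can contribute arbitrarily to $\|\cdot\|_\Kz$ while the combined $\Delta^{-1}$-interactions stay bounded, is the heart of the construction.
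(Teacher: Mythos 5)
Your argument is correct in outline, and it matches the paper on the key computation ($\|V\|_K=\infty$, evaluating $K(V,0,e_1)$ via $0\le|z|-z_1\le 1$ and $z_1\le|z|\le 2z_1$ on $A$). For $\|\Delta^{-1}V\|_\infty<\infty$ you take a different but workable route: the paper first reduces to $\mathbf x=(x_1,0,\dots,0)$ by Riesz rearrangement in the $\mathbf z_2$ variable and then splits $\int$ into three explicit pieces $I_1,I_2,I_3$ over $z_1<x_1-\sqrt{x_1}$, $z_1>x_1+\sqrt{x_1}$, and the middle slab; your dyadic-shell argument is slightly heavier on bookkeeping (and you should note that the $\mathbf x_2$-dependence needs the same rearrangement reduction, or at least an explicit argument for $|\mathbf x_2|\gg\sqrt{|x_1|}$), but the exponents you quote do close.

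Where you genuinely diverge from the paper is the compactly supported counterexample. The paper uses only the dilation $d_s f(x)=sf(\sqrt sx)$ (your $V_\lambda$ with $\lambda=1/\sqrt s$), keeps every scaled truncation $V_n$ supported in $B(0,1)$, and sums with weights $1/2^n$; then $\|\Delta^{-1}\widetilde V\|_\infty\le\sum_n 2^{-n}\|\Delta^{-1}V_n\|_\infty\le C$ is immediate by the triangle inequality, while $\|\widetilde V\|_K\ge 2^{-n}\|V_n\|_K\ge 2^n$ because $r_n$ is chosen so that $\|V\ind_{B_{r_n}}\|_K\ge 4^n$. You instead translate the scaled truncations to disjoint supports accumulating at a point and avoid the $2^{-n}$ weights, paying for it with a nontrivial cross-term estimate for $\sum_{m\ne n}\Delta^{-1}|\widetilde V_m|(x)$. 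Your exponent bookkeeping ($\|\widetilde V_m\|_1\sim 2^{m(7-3d)/2}$, $|p_n-p_m|\gtrsim 2^{-\min(n,m)}$, geometric decay in both $m<n$ and $m>n$ for $d\ge 4$) is correct, so the construction works; but the paper's version is cleaner because overlapping supports plus geometric weighting make the uniform $\Delta^{-1}$ bound trivial, whereas your version has to earn disjointness and then control interactions by hand. Both prove the proposition; the paper's is the more economical route, yours shows the same phenomenon can be localized at a single accumulation point without any weighting, which is a mildly stronger statement.
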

Generally,
for $d\ge 4$,
neither finiteness nor smallness of $\|\Delta^{-1} V \|_{\infty}$
are sufficient for the
comparability of $g$ and $G$, even for $V$ with fixed sign and compact support.

Here are a few more comments to relate our result to existing literature.
In \cite{MR2064932} Milman and Semenov denote
$e(V,0)=\|\Delta^{-1}|V|\|_{\infty}$ and introduce
$e_*(V,0)=\sup_{\alpha\in\Rd}\|V (-\Delta+2\alpha\cdot\nabla)^{-1}\|_{1\to 1}$ to describe \eqref{est:sharp_uni} -- see \cite[Theorem~1C]{MR2064932}.
The spatial anisotropy introduced by $\alpha\cdot\nabla$ has a similar role as 
that seen in the integral defining $S(V,t,x,y)$ and there are constants $c_1$, $c_2$ depending only on $d\ge 3$ such that
$$
c_1 \|V\|_{\Kz } \leq e_*(V,0)\leq c_2 \|V\|_{\Kz }\,.
$$
This is proved in \eqref{last_step} below. For $d=3$ we have $e(V,0)=e_*(V,0)$. On the contrary,  for $d\geq 4$ by Proposition~\ref{thm:dist} there is $V\leq 0$ such that $e(V,0)<\infty$ but $e_*(V,0)=\infty$.

For the last remark we restrict ourselves to $V\le 0$.
Then
the condition $
\|\Delta^{-1}V\|_{\infty}<\infty$ characterizes 
the plain global Gaussian bounds \eqref{est:gaus}, see \cite{MR1456565}.
By \eqref{eq:d_3}, for $d=3$ (and $V\leq 0$) 
the plain global Gaussian bounds \eqref{est:gaus} hold if and only if the sharp global Gaussian bounds \eqref{est:sharp_uni} hold.
In contrast, by Proposition~\ref{thm:dist} 
for 
$d\geq 4$ the property \eqref{est:gaus}  
is weaker than \eqref{est:sharp_uni}.

The remainder of the paper is structured as follows.
In Section~\ref{sec:proofs} we prove 
Theorem~\ref{thm:J_0},
Proposition~\ref{thm:D_Ld/2}
and
Proposition~\ref{thm:dist}.
Section~\ref{appendix}
gives auxiliary results,
in particular the following crucial estimate of an
inverse-Gaussian type integral.
\begin{thm}\label{thm:est2}
Let $c>0$, $\beta> 1$ and
\begin{align*}
f(a,b)&=\int_0^{\infty} u^{-\beta} e^{-c \left[ \sqrt{u}b - \frac{a}{\sqrt{u}} \right]^2}\,du\,,\qquad a,b>0\,.
\end{align*}
We have
$$
f(a,b)\overset{\beta,c}{\approx} \frac{(1+4ab)^{\beta-3/2}}{a^{2(\beta-1)}}\,.
$$
\end{thm}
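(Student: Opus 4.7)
The plan is to reduce $f(a,b)$ to a one-parameter integral and analyze its small- and large-argument regimes separately. Completing the square, $[\sqrt u\,b - a/\sqrt u]^2 = ub^2 + a^2/u - 2ab$, and then substituting $u = (a/b)v$ gives
$$f(a,b) = e^{2\lambda}\,(a/b)^{1-\beta}\,J(\lambda),\qquad \lambda := cab,$$
where $J(\lambda) := \int_0^\infty v^{-\beta} e^{-\lambda(v+1/v)}\,dv$. (By the standard integral representation, $J(\lambda) = 2K_{\beta-1}(2\lambda)$, the modified Bessel function of the second kind, and the claim follows from the classical small- and large-argument asymptotics of $K_\nu$; I prefer, however, to bound $J$ directly to keep explicit control of constants.) An elementary rearrangement using $ab = \lambda/c$ reduces the theorem to
$$e^{2\lambda}J(\lambda)\;\overset{\beta,c}{\approx}\;\frac{(1+\lambda)^{\beta-3/2}}{\lambda^{\beta-1}},\qquad \lambda>0.$$

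For $\lambda\ge 1$ I would use Laplace's method at $v=1$, the unique minimum of $v+1/v$, with value $2$ and $v+1/v - 2 = (v-1)^2/v$. Restricting to $|v-1|\le 1/2$ and using the Gaussian approximation $e^{-\lambda(v-1)^2}$ gives a main contribution of order $e^{-2\lambda}\lambda^{-1/2}$. On the complement, $v+1/v - 2 \ge 1/6$ (the minimum being attained at the endpoints $v=1/2$ and $v=3/2$), and for $v \ge 4$ additionally $v+1/v - 2\ge v/2$; these bounds show the tail is dominated by a multiple of $e^{-(2+1/6)\lambda}$, which is negligible. Hence $J(\lambda) \approx e^{-2\lambda}\lambda^{-1/2}$, matching $(1+\lambda)^{\beta-3/2}/\lambda^{\beta-1}$ in this regime.

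For $0<\lambda\le 1$ I would use the involution $v\mapsto 1/v$, which turns the integral into
$$J(\lambda) = \int_1^\infty \bigl(v^{-\beta} + v^{\beta-2}\bigr)e^{-\lambda(v+1/v)}\,dv.$$
Since $\beta>1$, the summand $v^{\beta-2}$ dominates at infinity; the bound $1\le e^{\lambda/v}\le e$ on $v\ge 1$ and $\lambda\le 1$, followed by the substitution $s=\lambda v$, gives
$$\int_1^\infty v^{\beta-2}e^{-\lambda(v+1/v)}\,dv \;\overset{\beta}{\approx}\; \lambda^{1-\beta}\int_\lambda^\infty s^{\beta-2}e^{-s}\,ds \;\overset{\beta}{\approx}\; \lambda^{1-\beta},$$
the last step using that the remaining integral is trapped between $\int_1^\infty s^{\beta-2}e^{-s}\,ds > 0$ and $\Gamma(\beta-1)$ for $\lambda\in(0,1]$. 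The $v^{-\beta}$ piece is bounded above by $1/(\beta-1)$ and is of lower order. Thus $J(\lambda)\approx \lambda^{1-\beta}$, which matches the target because $e^{-2\lambda}$ and $(1+\lambda)^{\beta-3/2}$ are both comparable to positive constants on $[0,1]$.

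The two regimes glue at $\lambda$ of order one up to $\beta$-dependent constants, which completes the argument. I expect the main technical nuisance to lie in bookkeeping constants uniformly across the crossover $\lambda\approx 1$ and in verifying that the subleading contributions in each regime are genuinely negligible; both are elementary but warrant care.
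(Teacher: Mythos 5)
Your proposal is correct, and it takes a genuinely different route from the paper. You reduce $f(a,b)=e^{2\lambda}(a/b)^{1-\beta}J(\lambda)$ with $\lambda=cab$ and $J(\lambda)=\int_0^\infty v^{-\beta}e^{-\lambda(v+1/v)}dv$ (indeed $J(\lambda)=2K_{\beta-1}(2\lambda)$), and then treat the two regimes $\lambda\ge 1$ and $\lambda\le 1$ separately: Laplace's method at $v=1$ for large $\lambda$ (yielding $J\approx e^{-2\lambda}\lambda^{-1/2}$) and a rescaling $s=\lambda v$ after symmetrizing $v\mapsto 1/v$ for small $\lambda$ (yielding $J\approx\lambda^{1-\beta}$). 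The paper instead avoids regime-splitting entirely: after the same initial scaling $u=(a/b)r$ and symmetrization $r\mapsto 1/r$, it uses the substitution $\sqrt r-1/\sqrt r=s/\sqrt{ab}$, which turns the quadratic exponent into a pure Gaussian $e^{-cs^2}$ on all of $\RR$ and leaves a polynomial-type prefactor $\bigl((s+\sqrt{4ab+s^2})/(2a)\bigr)^{2(\beta-1)}/\sqrt{4ab+s^2}$; the two-sided bound then reduces, via the elementary Lemma~\ref{lem:h}, to a single uniform estimate valid for all $ab>0$ at once. The paper's route buys a cleaner, case-free argument and, as a side effect, essentially explicit constants (Remark~\ref{rem:expl_final}); your route is conceptually closer to textbook Bessel asymptotics and perhaps more transparent about where the two powers $\lambda^{1-\beta}$ and $\lambda^{-1/2}$ come from, at the cost of having to control the crossover at $\lambda\approx 1$ and the tails (e.g.\ the small-$v$ tail $\int_0^{1/2}v^{-\beta}e^{-\lambda/v}dv$, which does require the decay from $e^{-\lambda/v}$ to overcome the nonintegrable $v^{-\beta}$). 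Both are sound; the bookkeeping you flag is real but routine.
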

Here $\overset{\beta,c}{\approx}$ means that the ratio of both sides is bounded above and below by constants depending only on $\beta$ and $c$.

\section{Proofs of main results}\label{sec:proofs}

For $t>0$, $x,y\in \Rd$, we consider
\begin{align}
N(V,t,x,y)&:=\int_0^{t/2}\int_{\Rd}\frac{e^{-|z-y+(\tau/t)(y-x)|^2/(4\tau)}}{\tau^{d/2}}|V(z)|dzd\tau \nonumber\\
&+\int_{t/2}^t\int_{\Rd} \frac{e^{-|z-y+(\tau/t)(y-x)|^2/(4(t-\tau))}}{(t-\tau)^{d/2}} |V(z)|dzd\tau= N(V,t,y,x) \,.\label{def:N}
\end{align}
Clearly, $S(V)=S(|V|)$ and $N(V)=N(|V|)$. 
Because of the work of Zhang \cite{MR1978999}, 
$N$ is a  convenient approximation of $S$.
Namely, by \cite[Lemma 3.1, Lemma 3.2]{MR1978999}, there  are constants $m_1,m_2$ depending only on $d$ such that
\begin{align}
S(V,t,x,y)&\geq m_1\, N(V,t/2,x,y)\,,\qquad t>0, \ x,y\in \Rd \,,\tag{L} \label{L}\\
S(V,t,x,y)&\leq m_2\, N(V,t,x,y)\,,\qquad t>0, \ x,y\in \Rd \,.\tag{U} \label{U}
\end{align}
As seen in \cite{MR1978999}, the comparability even holds for the kernels of $S$ and $N$.

In this section we prove out main result, i.e., Theorem~\ref{thm:J_0}. We start by using $N(V,t)$, \eqref{U} and \eqref{L}, to estimate $S(V,t)$.
\begin{lem}\label{lem:upr}
Let $t>0$. We have
\begin{align*}
\int_0^{t/2}\int_{\Rd}\frac{e^{-|z-y+(\tau/t)(y-x)|^2/(4\tau)}}{\tau^{d/2}}|V(z)|\,dzd\tau  \leq N(V,t)(x,y)\,,\quad x,y\in \Rd\,,
\end{align*}
and
\begin{align*}
\sup_{x,y}N(V,t)(x,y) \leq 2 \,\sup_{x,y} \int_0^{t/2}\int_{\Rd}\frac{e^{-|z-y+(\tau/t)(y-x)|^2/(4\tau)}}{\tau^{d/2}}|V(z)|\,dzd\tau\,.
\end{align*}
\end{lem}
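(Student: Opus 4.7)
The first inequality is immediate: by its very definition in \eqref{def:N}, $N(V,t,x,y)$ is a sum of two nonnegative integrals, the first of which is precisely the quantity on the left-hand side. So there is nothing to do here beyond remarking this.

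For the second inequality the plan is to show that the second summand in the definition of $N(V,t,x,y)$ equals the first summand evaluated at the swapped pair $(y,x)$ instead of $(x,y)$; the bound with the factor $2$ then follows by taking suprema termwise. Concretely, in the second integral
$$\int_{t/2}^t\int_{\Rd} \frac{e^{-|z-y+(\tau/t)(y-x)|^2/(4(t-\tau))}}{(t-\tau)^{d/2}}|V(z)|\,dzd\tau$$
I would perform the substitution $\sigma=t-\tau$, which maps $[t/2,t]$ onto $[0,t/2]$ and turns $(t-\tau)^{-d/2}\,d\tau$ into $\sigma^{-d/2}\,d\sigma$. The key algebraic identity, obtained by expanding and regrouping, is
$$-y+\frac{t-\sigma}{t}(y-x)=-x+\frac{\sigma}{t}(x-y),$$
so that after the substitution the integrand becomes $\sigma^{-d/2}\exp\bigl(-|z-x+(\sigma/t)(x-y)|^2/(4\sigma)\bigr)|V(z)|$. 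This is exactly the integrand of the first summand of $N(V,t,y,x)$, hence the second summand of $N(V,t,x,y)$ equals the first summand of $N(V,t,y,x)$. Writing $I(x,y)$ for the quantity on the left-hand side of the first inequality, we obtain $N(V,t,x,y)=I(x,y)+I(y,x)$, and taking the supremum over $x,y$ yields
$$\sup_{x,y}N(V,t,x,y)\le \sup_{x,y}I(x,y)+\sup_{x,y}I(y,x)=2\sup_{x,y}I(x,y),$$
which is the claimed bound.

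There is no genuine obstacle; the only point that requires a little attention is the affine identity displayed above, which has to be verified before the substitution gives the desired symmetry. Once this is in hand, the proof is mechanical.
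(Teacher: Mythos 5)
Your proof is correct and follows essentially the same route as the paper: the first bound is by definition, and the second comes from recognizing, via the substitution $\sigma=t-\tau$ and the identity $-y+\frac{t-\sigma}{t}(y-x)=-x+\frac{\sigma}{t}(x-y)$, that the second summand of $N(V,t,x,y)$ is the first summand with $x$ and $y$ interchanged. Your write-up in fact spells out the change of variables more explicitly than the paper does.
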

\begin{proof}
The first inequality follows by the definition of $N(V,t)(x,y)$. For the proof of the second one we note that
\begin{align*}
\int_{t/2}^t\int_{\Rd} &\frac{e^{-|z-y+(\tau/t)(y-x)|^2/(4(t-\tau))}}{(t-\tau)^{d/2}} |V(z)|dzd\tau\\
&\qquad = \int_0^{t/2}\int_{\Rd} \frac{e^{-|z-x+(\tau/t)(x-y)|^2/(4\tau)}}{\tau} |V(z)|dzd\tau\,.
\end{align*}
\end{proof}
For $x,y\in\Rd$ we let
$$J(x,y)=\int_0^{\infty} \tau^{-d/2} e^{-\frac{|x-\tau y|^2}{4\tau}} d\tau\,.$$
In view of the discussion in Section~\ref{sec:i} we have
\begin{align*}
e_*(V,0)&=\sup_{\alpha \in \Rd}\|(-\Delta+2\alpha\cdot\nabla )^{-1}|V|\|_{\infty}\\
&=(4\pi)^{-d/2}\sup_{x,y\in\Rd} \int_{\Rd} J(z-x,y) |V(z)|\,dz\,.
\end{align*}
\begin{lem}\label{lem:J_0}
We have
\begin{align*}
\sup_{t>0,\,x,y\in\Rd} S(V,t,x,y) \geq m_1\sup_{x,y\in\Rd} \int_{\Rd} J(z-x,y)|V(z)|\,dz\,.
\end{align*}
and
\begin{align*}
\sup_{t>0,\,x,y\in\Rd} S(V,t,x,y)\leq 2\, m_2 \sup_{x,y\in\Rd} \int_{\Rd} J(z-x,y)|V(z)|\,dz\,,
\end{align*}
\end{lem}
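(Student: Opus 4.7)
The plan is to combine the two-sided comparison between $S$ and $N$ given by \eqref{L}--\eqref{U}, Lemma~\ref{lem:upr}, and the elementary identity $z-y+(\tau/t)(y-x)=(z-y)-\tau\cdot\tfrac{x-y}{t}$, which recasts the phase in the first integrand of $N$ exactly in the form appearing in the definition of $J$.

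For the upper bound, extending $\tau$ from $(0,t/2)$ to the full half-line in the first integral of $N(V,t,x,y)$ makes that integral dominated by $\int_{\Rd}|V(z)|\,J\bigl(z-y,\tfrac{x-y}{t}\bigr)\,dz$. By Lemma~\ref{lem:upr} twice this bound controls all of $N(V,t,x,y)$, and then \eqref{U} yields
\[S(V,t,x,y)\le 2m_2\int_{\Rd}|V(z)|\,J\bigl(z-y,\tfrac{x-y}{t}\bigr)\,dz.\]
As $(t,x,y)$ ranges over $(0,\infty)\times\Rd\times\Rd$, the pair $\bigl(y,\tfrac{x-y}{t}\bigr)$ covers all of $\Rd\times\Rd$ (take $x=y+ty'$ for arbitrary $x'=y, y'$), so taking $\sup$ produces the second inequality of the lemma.

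For the lower bound, \eqref{L} together with the positivity of the second integrand in $N(V,t/2,\cdot,\cdot)$ yields
\[S(V,t,x,y)\ge m_1\int_0^{t/4}\int_{\Rd}\tau^{-d/2}e^{-|z-y+(2\tau/t)(y-x)|^2/(4\tau)}|V(z)|\,dz\,d\tau.\]
Given $x',y'\in\Rd$, I would specialise to $y=x'$ and $x=x'+(t/2)y'$; the phase then collapses to $|(z-x')-\tau y'|^2/(4\tau)$, independent of $t$, while the $\tau$-domain $(0,t/4)$ exhausts $(0,\infty)$ as $t\to\infty$. Monotone convergence lifts the inequality to $\sup_{t,x,y}S(V,t,x,y)\ge m_1\int_{\Rd}|V(z)|J(z-x',y')\,dz$, and then taking $\sup_{x',y'}$ gives the first inequality.

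The only non-routine step is spotting that the free time parameter $t$ in $N$ plays the role of the ``drift'' argument of $J$ via the substitution $y'=\tfrac{x-y}{t}$; once this is noticed, the rest is a mechanical application of Lemma~\ref{lem:upr}, the bounds \eqref{L}--\eqref{U}, and monotone convergence in the $\tau$ variable.
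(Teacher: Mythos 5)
Your proposal follows the paper's route almost exactly: pass from $S$ to $N$ via \eqref{L}--\eqref{U}, reduce $N$ to its first half-integral using Lemma~\ref{lem:upr}, rewrite the phase $z-y+(\tau/t)(y-x)=(z-y)-\tau\,\tfrac{x-y}{t}$ so that $J$ appears, and absorb the free time variable into the second argument of $J$ by monotone convergence. The lower-bound argument (specialise $y=x'$, $x=x'+(t/2)y'$, then let $t\to\infty$) is correct as written and is just a more explicit rendering of the paper's change of variables $w=(2/t)(y-x)$ inside the supremum.

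One imprecision in the upper bound: the displayed pointwise claim $S(V,t,x,y)\le 2m_2\int_{\Rd}|V(z)|\,J\bigl(z-y,\tfrac{x-y}{t}\bigr)\,dz$ is not valid for fixed $(x,y)$. Lemma~\ref{lem:upr} only compares \emph{suprema}: the second half of $N(V,t,x,y)$ equals the first half with $x$ and $y$ interchanged, so it is not dominated by the first half pointwise, and the factor $2$ appears only after taking $\sup_{x,y}$ on both sides. The correct chain is $\sup_{x,y}S(V,t,x,y)\le m_2\sup_{x,y}N(V,t,x,y)\le 2m_2\sup_{x,y}\int_{\Rd}|V(z)|\,J\bigl(z-y,\tfrac{x-y}{t}\bigr)\,dz$, followed by $\sup_{t>0}$ and the reparametrisation $(y,(x-y)/t)\mapsto(y,w)$. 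With that order of operations your conclusion is correct, and the argument coincides with the paper's.
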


\begin{proof}
By \eqref{L} and Lemma~\ref{lem:upr},
\begin{align*}
\sup_{t>0,\,x,y\in\Rd}& S(V,t,x,y)\geq m_1 \sup_{t>0,\,x,y\in\Rd} N(|V|,t/2)(x,y)\\
&\geq m_1 \sup_{t>0,\,x,y\in\Rd} \int_0^{t/4}\int_{\Rd}\frac{e^{-|z-y+(2\tau/t)(y-x)|^2/(4\tau)}}{\tau^{d/2}}|V(z)|\,dzd\tau\\
&= m_1\sup_{t>0,\,y,w\in\Rd} \int_0^{t/4}\int_{\Rd}\frac{e^{-|z-y+\tau w|^2/(4\tau)}}{\tau^{d/2}}|V(z)|\,dzd\tau\\
&= m_1 \sup_{y,w\in\Rd} \int_{\Rd} J(z-y,w) |V(z)|\,dz\,.
\end{align*}
By \eqref{U} and Lemma~\ref{lem:upr},
\begin{align*}
\sup_{t>0,\,x,y\in\Rd}& S(V,t,x,y)\leq m_2\sup_{t>0,\, x,y\in\Rd} N(V,t)(x,y)\\
&\leq 2\, m_2 \sup_{t>0,\, x,y\in\Rd} \int_0^{t/2}\int_{\Rd}\frac{e^{-|z-y+(\tau/t)(y-x)|^2/(4\tau)}}{\tau^{d/2}}|V(z)|\,dzd\tau\\
&\leq 2\, m_2 \sup_{t>0,\, y,w\in\Rd} \int_0^{t/2}\int_{\Rd}\frac{e^{-|z-y+\tau w|^2/(4\tau)}}{\tau^{d/2}}|V(z)|\,dzd\tau\\
&= 2 \, m_2 \sup_{y,w\in\Rd} \int_{\Rd} J (z-y,w) |V(z)|\,dz\,.
\end{align*}
\end{proof}

\begin{proof}[Proof of Theorem~\ref{thm:J_0}]
We claim \eqref{eq:cSK0} holds
with $M_1>0$ that depends only on $d$, and
$M_2= m_2 2^{d} \int_0^{\infty} (1\vee r)^{d/2-3/2}\, r^{-1/2}\, e^{-r}\,dr$.
To this end, according to Lemma~\ref{lem:J_0},  we analyze
\begin{align*}
J(z-x,y)=
\int_0^{\infty} \tau^{-d/2} e^{-\frac{|z-x-\tau y|^2}{4\tau}} d\tau\,.
\end{align*}
Obviously, $J=\infty$ if $d=1$ or $d=2$. For $d\geq 3$ we observe that
$$
\frac{|z-x-\tau y|^2}{4\tau}=\frac1{4}\left[\frac{|z-x|}{\sqrt{\tau}}-\sqrt{\tau}|y| \right]^2+\frac1{2}\big(|z-x||y|-(z-x)\cdot y\big)\,,
$$
and thus
$$
J(z-x,y)= e^{-\frac1{2}\left(|z-x||y|-(z-x)\cdot y \right)}
\int_0^{\infty} \tau^{-d/2} 
e^{-\frac1{4}\left[\sqrt{\tau}|y|-\frac{|z-x|}{\sqrt{\tau}} \right]^2} d\tau\,.
$$
Finally, by Theorem~\ref{thm:est2} 
with $a=|z-x|/2$, $b=|y|/2$, $\beta=d/2$ and $c=1$,
\begin{align}\label{last_step}
J(z-x,y)\overset{d}{\approx} 
\Kz (z-x,y)\,.
\end{align}
This also gives the explicit constants, as a consequence of Remark~\ref{rem:expl_final}. For instance we can take $M_2= 8m_2 \sqrt{\pi}$ if $d=3$.
\end{proof}

\begin{proof}[Proof of Proposition~\ref{thm:D_Ld/2}]
The left hand side inequality follows from the identity
$\Kz  (V)(x,0)= C_d^{-1} (-\Delta^{-1})|V|(x)$.
If $y=0$, then the upper bound trivially holds. For $y\neq 0$
we consider two domains of integration. We have
\begin{align*}
\int_{|z-x||y|\leq 1} &\Kz (z-x,y)|V(z)|\,dz
\leq 2^{(d-3)/2} \int_{|z-x||y|\leq 1} \frac{1}{|z-x|^{d-2}} |V(z)|dz\\
&\leq \frac{2^{(d-3)/2}}{C_d} |\!|\Delta^{-1}|V| |\!|_{\infty}\,.
\end{align*}
Furthermore, by a change of variables and H{\"o}lder inequality,
\begin{align*}
&\int_{|z-x||y|\geq 1} \Kz (z-x,y)|V(z)|dz\\
&\leq 2^{(d-3)/2} \!\!\! \int_{|z-x||y|\geq 1}
\frac{e^{-\frac1{2}\left(|z-x||y|-(z-x)\cdot y \right)}}{(|z-x||y|)^{(d-1)/2}} |y|^{d-2}|V(z)|dz
\leq 2^{(d-3)/2}\kappa_d  |\!|V |\!|_{d/2}
\,,
\end{align*}
where
$$
\kappa_d=\left(
\int_{|w|>1} 
\left(e^{-\frac1{2}(|w|-w\cdot 1 )}|w|^{-(d-1)/2}\right)^{d/(d-2)} dw \right)^{(d-2)/d} \,.
$$
The finiteness of $\kappa_d$
follows from Lemma~\ref{lem:const} below.
\end{proof}

\begin{proof}[Proof of Proposition~\ref{thm:dist}]
We use the notation introduced in the formulation of the theorem.
First we prove that $\|V\|_{\Kz }=\infty$.
Let $\mathbf  y=(1,\mathbf 0)\in \mathbb R^d$, $\mathbf x=0$.
Observe that for $\mathbf z\in A$ we have
$$ 0\leq |\mathbf z||\mathbf y|-\mathbf z\cdot \mathbf y=|\mathbf z|-z_1 = \frac{|\mathbf z_2|^2}{\sqrt{z_1^2+|\mathbf z_2|^2}+z_1}\leq \frac{z_1}{\sqrt{z_1^2+|\mathbf z_2|^2}+z_1}\leq 1 $$
and thus also
$ z_1\le |\mathbf z| \le 2z_1$. Then,
\begin{align*}
\|V\|_{\Kz }&\geq \int_{\mathbb R^d} e^{-\frac{1}{2}(|\mathbf z||\mathbf y|-\mathbf z\cdot \mathbf y)}|V(\mathbf z)|\frac{(1+|\mathbf z||\mathbf y|)^{\frac{d}{2}-\frac{3}{2}}}{|\mathbf z|^{d-2}}\, d\mathbf z
\geq c\int_A \frac{1}{z_1}\frac{z_1^{\frac{d}{2}-\frac{3}{2}}}{z_1^{d-2}}\, d\mathbf z\\
&=c\int_{4}^\infty \int_{|\mathbf z_2|<\sqrt{z_1}} z_1^{-1+2-d+\frac{d}{2}-\frac{3}{2}}\, d\mathbf z_2\, dz_1
=c_1\int_{4}^\infty  z_1^{-1+2-d+\frac{d}{2}-\frac{3}{2}+\frac{1}{2}(d-1)}\, dz_1\\
&=c_1\int_{4}^\infty \frac{1}{z_1}\, dz_1=\infty. 
\end{align*}
We now prove that $\|\Delta^{-1} |V| \|_{\infty}<\infty$.
By the symmetric rearrangement inequality (see \cite[Chapter~3]{MR1817225}) we have
\begin{align*}
\sup_{\mathbf x\in \mathbb R^d}\int_{\mathbb R^d} \frac{1}{|\mathbf z-\mathbf x|^{d-2}} |V(\mathbf z)|\, d\mathbf z
= \sup_{x_1\in \mathbb R} \int_4^{\infty} \int_{\RR^{d-1}} \frac{\ind_{|\mathbf z_2|<\sqrt{z_1}}}{[(z_1-x_1)^2+|\mathbf z_2|^2\,]^{(d-2)/2}}\frac1{z_1}d\mathbf z_2\,dz_1
\end{align*}
It suffices then to consider $\mathbf x=(x_1,0,\ldots, 0)$ and
we only need to show that the following three integrals are uniformly bounded for $x_1\geq 4$.
The first integral is
\begin{align*}
I_1&=\int_{x_1+\sqrt{x_1}}^\infty  \int_{|\mathbf z_2|<\sqrt{z_1}}
\frac{1}{|z_1-x_1|^{d-2}+|\mathbf z_2|^{d-2}} \frac{1}{z_1}\, d\mathbf z_2\, dz_1\\ 
& \leq \int_{x_1+\sqrt{x_1}}^\infty  \int_{|\mathbf z_2|<\sqrt{z_1}}
\frac{1}{|z_1-x_1|^{d-2}} \frac{1}{z_1}\, d\mathbf z_2\, dz_1\\
&= c \int_{x_1+\sqrt{x_1}}^\infty  
\frac{1}{|z_1-x_1|^{d-2}} \frac{1}{z_1} z_1^{\frac{1}{2}(d-1)}\, dz_1
 = c  \int_{\sqrt{x_1}}^\infty
\frac{1}{z_1^{d-2}} \ (z_1+x_1)^{\frac{d}{2}-\frac{3}{2}}\, dz_1\\
&\leq c' \int_{\sqrt{x_1}}^{x_1}
\frac{1}{z_1^{d-2}} \ x_1^{\frac{d}{2}-\frac{3}{2}}\, dz_1 
+c' \int_{x_1}^\infty 
\frac{1}{z_1^{d-2}} \ z_1^{\frac{d}{2}-\frac{3}{2}}\, dz_1 \leq c''<\infty.
\end{align*}
The second integral we consider is
\begin{align*}
I_2 & = \int_2^{x_1-\sqrt{x_1}}  \int_{|\mathbf z_2|<\sqrt{z_1}}
\frac{1}{{|z_1-x_1|^{d-2}+|\mathbf z_2|^{d-2}}} \frac{1}{z_1}\, d\mathbf z_2\, dz_1\\
& \leq
\int_2^{x_1-\sqrt{x_1}}  \int_{|\mathbf z_2|<\sqrt{z_1}}
\frac{1}{{|z_1-x_1|^{d-2}}} \frac{1}{z_1}\, d\mathbf z_2\, dz_1
=c \int_2^{x_1-\sqrt{x_1}}  
\frac{1}{{(x_1-z_1)^{d-2}}} z_1^{\frac{d}{2} -\frac{3}{2}}\,  dz_1\\
&= c \int_{\sqrt{x_1}}^{x_1-2} \frac{1}{w^{d-2}} (x_1-w)^{\frac{d}{2}-\frac{3}{2}}\, dw
\leq c \int_{\sqrt{x_1}}^{x_1-2} \frac{1}{w^{d-2}} x_1^{\frac{d}{2}-\frac{3}{2}}\, dw
\le c'<\infty.
\end{align*}
The remaining integral is
\begin{align*}
I_3
&=\int_{x_1-\sqrt{x_1}}^{x_1+\sqrt{x_1}}  \int_{|\mathbf z_2|<\sqrt{z_1}}
\frac{1}{{|\mathbf z-\mathbf x|^{d-2}} }\frac{1}{z_1}\, d\mathbf z
\leq  2\int_{x_1-\sqrt{x_1}}^{x_1+\sqrt{x_1}}  \int_{|\mathbf z_2|<2\sqrt{x_1}}
\frac{1}{{|\mathbf z-\mathbf x|^{d-2}}} \frac{1}{x_1}\, d\mathbf z\\
&\leq 2 \int_{B(\mathbf x,\, 3 \sqrt{x_1})}\frac{1}{{|\mathbf z-\mathbf x|^{d-2}}} \frac{1}{x_1}\, d\mathbf z
\leq c< \infty.
\end{align*} 
To prove the second statement of Proposition~\ref{thm:dist}, for $s>0$ we let 
$
{\it d}_s 
f(x)=sf(\sqrt{s}x)$. Note that the dilatation does not change the norms:
$$
\| \Delta^{-1}({\it d}_s f)  \|_{\infty}=\| \Delta^{-1} f \|_{\infty}\,, \qquad \|{\it d}_s f\|_{\Kz }=\|f\|_{\Kz } \,. 
$$
Moreover, ${\rm supp} ({\it d}_s f) \subseteq B(0,r/\sqrt{s})$ if ${\rm supp} (f)\subseteq B(0,r)$, $r>0$.
Now, let $V\geq 0$ be a potential such that 
$\|\Delta^{-1}V\|_{\infty}=C<\infty$ and $\|V\|_{\Kz }=\infty$. 
Then for any $r>0$ we have $\|\Delta^{-1} (V\ind_{B_r})\|_{\infty}\leq C$ and $\|V\ind_{B_r}\|_{\Kz }\to \infty$ as $r\to \infty$.
For $n\in \NN$ we define $$V_n={\it d}_{r_n^2}(V\ind_{B_{r_n}}) \,,$$ 
 where $r_n$ is chosen such that 
$
\|V\ind_{B_{r_n}}\|_{\Kz }\geq 4^n
$.
Thus ${\rm supp}(V_n)\subseteq B(0,1)$.
Finally we consider $\tilde{V}=\sum_{n=1}^{\infty}V_n/2^n$. Then,
$$
\|\tilde{V}\|_{\Kz }\geq \|V_n\|_{\Kz }/2^n\geq 2^n \to \infty\,,\quad n \to \infty \,,
$$
and
$$
\|\Delta^{-1}\tilde{V}\|_{\infty}\leq \sum_{n=1}^{\infty} \|\Delta^{-1}V_n\|_{\infty}/2^n \leq C\,.
$$
\end{proof}

\section{Appendix}
\label{appendix}
In this section we collect 
auxiliary calculations.

\begin{lem}\label{lem:h}
Let $\gamma>-1/2$. Then
$$
h(x)=\int_0^{\infty} \left(x+s^2 \right)^{\gamma} e^{-cs^2}ds\,  \overset{\gamma,c}{\approx}\, \left(1+x\right)^{\gamma}\,,\qquad x\geq 0\,.
$$
\end{lem}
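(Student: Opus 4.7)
The plan is to split the analysis into the two regimes $0\le x\le 1$ and $x\ge 1$, in which $(1+x)^{\gamma}\approx 1$ and $(1+x)^{\gamma}\approx x^{\gamma}$ respectively, and to establish matching upper and lower bounds for $h(x)$ in each regime.

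In the regime $0\le x\le 1$ I would exploit the monotonicity of $x\mapsto (x+s^2)^{\gamma}$ (non-decreasing if $\gamma\ge 0$, non-increasing if $\gamma<0$) to sandwich $(x+s^2)^{\gamma}$ between constant multiples of $(1+s^2)^{\gamma}$ and $s^{2\gamma}$. Combined with the facts that $\int_0^{\infty}(1+s^2)^{\gamma}e^{-cs^2}\,ds$ and $\int_0^{\infty}s^{2\gamma}e^{-cs^2}\,ds$ are both finite, this shows that $h(x)$ lies between two positive constants depending only on $\gamma$ and $c$, matching $(1+x)^{\gamma}\approx 1$.

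In the regime $x\ge 1$ I would split $h(x)=\int_0^{\sqrt{x}}+\int_{\sqrt{x}}^{\infty}$. On $[0,\sqrt{x}]$ one has $x\le x+s^2\le 2x$, so $(x+s^2)^{\gamma}\approx x^{\gamma}$ uniformly in $s$, and $\int_0^{\sqrt{x}}e^{-cs^2}\,ds$ is sandwiched between $\int_0^{1}e^{-cs^2}\,ds$ and $\int_0^{\infty}e^{-cs^2}\,ds$ whenever $x\ge 1$; together these yield $\int_0^{\sqrt{x}}(x+s^2)^{\gamma}e^{-cs^2}\,ds\approx x^{\gamma}$. On $[\sqrt{x},\infty)$ I would bound $(x+s^2)^{\gamma}$ by $(2s^2)^{\gamma}$ when $\gamma\ge 0$ and by $s^{2\gamma}$ when $\gamma<0$, and then use $e^{-cs^2}\le e^{-cx/2}e^{-cs^2/2}$ on the tail; this makes the tail contribution exponentially small in $x$, hence negligible compared with $x^{\gamma}$.

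I expect the only genuine subtlety to be the integrability of $\int_0^{1}s^{2\gamma}e^{-cs^2}\,ds$ when $\gamma$ is negative, which is exactly what forces and pinpoints the hypothesis $\gamma>-1/2$; all remaining steps are routine comparisons based on the monotonicity of $x\mapsto (x+s^2)^{\gamma}$ and on Gaussian decay in $s$.
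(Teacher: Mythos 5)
Your argument is correct, and it takes a genuinely different route from the paper. The paper substitutes $r=s^2$ and factors out $(1+x)^{\gamma}$, rewriting $h(x)=(1+x)^{\gamma}\cdot\tfrac12\int_0^{\infty}\bigl(\tfrac{x+r}{1+x}\bigr)^{\gamma}r^{-1/2}e^{-cr}\,dr$; it then shows the remaining integral is bounded above and below by $\gamma,c$-constants using the two-sided bound $1\vee r\ge \tfrac{x+r}{1+x}\ge r/2$ (for $x<1$) or $\ge 1/2$ (for $x\ge1$), with the hypothesis $\gamma>-1/2$ entering exactly through the integrability of $r^{\gamma-1/2}e^{-cr}$ near $0$. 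You instead keep the integral in the $s$-variable, split on $x\le 1$ versus $x\ge1$, and in the second regime further split at $s=\sqrt{x}$; the bulk $[0,\sqrt{x}]$ gives $\approx x^{\gamma}$ outright, and the tail is controlled by $e^{-cx/2}$ via $e^{-cs^2}\le e^{-cx/2}e^{-cs^2/2}$, with $\gamma>-1/2$ entering via the integrability of $s^{2\gamma}e^{-cs^2}$ near $0$. Your version is more elementary and makes the role of the threshold $\gamma>-1/2$ very visible; the paper's version is more compact and, because it isolates $(1+x)^{\gamma}$ as a prefactor, makes it easy to read off an explicit upper constant $\tfrac12\int_0^{\infty}(1\vee r)^{\gamma}r^{-1/2}e^{-cr}\,dr$ (used later in Remark~\ref{rem:expl}), which is less immediate in your case-by-case splitting. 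Both are sound.
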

\begin{proof}
By putting $r=s^2$ we get 
\begin{align*}
h(x)=\left(1+x\right)^{\gamma} \int_0^{\infty} \left( \frac{x+r}{1+x} \right)^{\gamma} r^{-1/2}\,\frac{e^{-cr}dr}{2}\,,
\end{align*}
Since for all $x,r\geq 0$ we have
\begin{align*}
1\vee r \geq \frac{x}{1+x} + \frac{r}{1+x} \geq \begin{cases} r/2\,, \quad {\rm for}\quad x\in(0,1)\,,\\ 1/2\,, \quad {\rm for} \quad x \geq 1\,,\end{cases}
\end{align*}
the last integral in the above is comparable with a positive constant depending only on $\gamma$ and $c$.
\end{proof}

\begin{rem}\label{rem:expl}
\rm
If $\gamma\geq 0$, then
$h(x)  \leq \, C \left(1+x\right)^{\gamma}$, $x\geq 0$,
where $C=\frac1{2}\int_0^{\infty} (1\vee r)^{\gamma} r^{-1/2} e^{-cr}dr $.
\end{rem}

\begin{lem}\label{lem:Iapp}
Let $c>0$, $\beta> 1$ and
\begin{align*}
 I_{\rm app}(a,b)=\int_0^{\infty} 
\left(\frac{s+\sqrt{4ab+s^2}}{2a}  \right)^{2(\beta-1)}
\frac{e^{-cs^2}\,ds}{\sqrt{4ab+s^2}}\,,\qquad a,b>0\,.
\end{align*}
Then
\begin{align*}
 I_{\rm app}(a,b) \overset{\beta,c}{\approx} \frac{\left(1+ 4ab \right)^{\beta -3/2}}{a^{2(\beta-1)}}\,.
\end{align*}
\end{lem}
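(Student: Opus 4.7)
The plan is to strip off the algebraically awkward factor $\left((s+\sqrt{4ab+s^2})/(2a)\right)^{2(\beta-1)}$ by replacing it, up to constants depending only on $\beta$, with the simpler expression $(4ab+s^2)^{\beta-1}/a^{2(\beta-1)}$, and then to reduce the result to Lemma~\ref{lem:h} via the substitution $x=4ab$.

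First I would set $x=4ab$ and rewrite
\begin{align*}
a^{2(\beta-1)}\,I_{\rm app}(a,b)\;=\;\int_0^\infty \left(\frac{s+\sqrt{x+s^2}}{2}\right)^{\!2(\beta-1)} \frac{e^{-cs^2}\,ds}{\sqrt{x+s^2}}\,,
\end{align*}
so that the claim becomes: this integral is $\overset{\beta,c}{\approx}(1+x)^{\beta-3/2}$ uniformly in $x\ge 0$. The key elementary observation is the two-sided bound
\begin{align*}
\sqrt{x+s^2}\;\le\;s+\sqrt{x+s^2}\;\le\;2\sqrt{x+s^2}\,,\qquad s\ge 0,
\end{align*}
where the left inequality uses $s\ge 0$ and the right one uses $s\le\sqrt{x+s^2}$. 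Since $\beta-1>0$, raising to the power $2(\beta-1)$ yields
\begin{align*}
\left(\tfrac{s+\sqrt{x+s^2}}{2}\right)^{2(\beta-1)}\;\overset{\beta}{\approx}\;(x+s^2)^{\beta-1}\,,
\end{align*}
with implicit constants $1$ and $2^{-2(\beta-1)}$.

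Substituting into the integral collapses it to
\begin{align*}
\int_0^\infty (x+s^2)^{\beta-3/2}\,e^{-cs^2}\,ds\,,
\end{align*}
and this is precisely the integral $h(x)$ from Lemma~\ref{lem:h} with $\gamma=\beta-3/2$, which is admissible because the hypothesis $\beta>1$ gives $\gamma>-1/2$. Lemma~\ref{lem:h} therefore provides the comparability with $(1+x)^{\beta-3/2}$, and dividing by $a^{2(\beta-1)}$ finishes the proof.

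There is essentially no obstacle here: the only point one must verify is that the coarse replacement $s+\sqrt{x+s^2}\approx\sqrt{x+s^2}$ does not lose a $\beta$- or $x$-dependent factor, and this is guaranteed by the uniform bound by $2$ on the ratio together with $\beta-1>0$. All other steps are bookkeeping, and the role of the hypothesis $\beta>1$ is transparent: it is exactly what is needed to invoke Lemma~\ref{lem:h}.
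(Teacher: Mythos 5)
Your proof is correct and is essentially the paper's proof: both rely on the elementary bound $\sqrt{4ab+s^2}\le s+\sqrt{4ab+s^2}\le 2\sqrt{4ab+s^2}$ to replace the integrand by $(4ab+s^2)^{\beta-3/2}e^{-cs^2}$ up to a factor of $2^{2(\beta-1)}$, thereby reducing $I_{\rm app}$ to $a^{-2(\beta-1)}h(4ab)$ and invoking Lemma~\ref{lem:h} with $\gamma=\beta-3/2$. The only cosmetic difference is that you rename $x=4ab$ before estimating, which the paper does not bother to do.
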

\begin{proof}
Observe that $0\leq s \leq \sqrt{4ab+s^2}$. Thus with $h(x)$ and $\gamma=\beta-3/2$ from Lemma~\ref{lem:h} we have
\begin{align}\label{expl_1}
2^{-2(\beta-1)} a^{-2(\beta-1)} \leq \frac{I_{\rm app}(a,b)}{h(4ab)}\leq a^{-2(\beta-1)}\,.
\end{align}
The assertion follows by Lemma~\ref{lem:h}.
\end{proof}

\begin{proof}[Proof of Theorem~\ref{thm:est2}]
By substitution $u=(a/b)r$ we obtain
$$
f(a,b)=(a/b)^{1-\beta}\int_0^{\infty} r^{-\beta+1} e^{-c ab\left[\sqrt{r}-\frac1{\sqrt{r}} \right]^2} \frac{dr}{r}\,.
$$
By change of variables from $r$ to $1/r$ we get
$$
f(a,b)=(a/b)^{1-\beta}\int_0^{\infty} r^{\beta-1} e^{-c ab\left[\sqrt{r}-\frac1{\sqrt{r}} \right]^2} \frac{dr}{r}\,.
$$
Finally, we let $ \sqrt{r} - 1/\sqrt{r}=s/\sqrt{ab}$, then $\left( \sqrt{r} -s/\sqrt{4ab} \right)^2= 1+s^2/(4ab)$. Note that $\sqrt{r}>s/\sqrt{4ab}$, hence $$r=\left( s/\sqrt{4ab}+\sqrt{1+s^2/(4ab)}\right)^2=\left( s+\sqrt{4ab+s^2} \right)^2 /(4ab)\,,$$
and
\begin{align*}
dr&= 2 \left( s+\sqrt{4ab+s^2} \right) \left( 1+s/\sqrt{4ab+s^2}\right)\,ds /(4ab)\\
&= 2 \left( s+\sqrt{4ab+s^2} \right)^2 \,ds/\left(4ab\sqrt{4ab+s^2}\right)\\
&= 2 r\, ds/\sqrt{4ab+s^2}\,.
\end{align*}
This gives
\begin{align*}
&f(a,b)=2 \int_{-\infty}^{\infty} \left(\frac{s+\sqrt{4ab+s^2}}{2a} \right)^{2(\beta-1)} \frac{e^{-cs^2}ds}{\sqrt{4ab+s^2}}\,.
\end{align*}
By splitting the last integral we have
\begin{align*}
f(a,b)=2 \int_0^{\infty} \left(\frac{s+\sqrt{4ab+s^2}}{2a} \right)^{2(\beta-1)} \frac{e^{-cs^2}ds}{\sqrt{4ab+s^2}}\\
+2 \int_0^{\infty} \left(\frac{-s+\sqrt{4ab+s^2}}{2a} \right)^{2(\beta-1)} \frac{e^{-cs^2}ds}{\sqrt{4ab+s^2}}\,.
\end{align*}
Since $\beta>1$ and $0\leq -s +\sqrt{4ab+s^2}\leq s+\sqrt{4ab+s^2}$, we have
\begin{align}\label{expl_2}
2\, I_{\rm app}(a,b)\leq f(a,b)\leq 4\, I_{\rm app}(a,b)\,.
\end{align}
The proof is ended by an applications of Lemma \ref{lem:Iapp}.
\end{proof}

\begin{rem}\label{rem:expl_final}
\rm
Using \eqref{expl_2}, \eqref{expl_1} and Remark~\ref{rem:expl}
we get an explicit constant in the upper bound in Theorem~\ref{thm:est2} for $\beta\geq 3/2$:
$$
f(a,b)\leq C  \,\frac{(1+4ab)^{\beta-3/2}}{a^{2(\beta-1)}}\,,
$$
where 
$$
C=2 \int_0^{\infty} (1\vee r)^{\beta-3/2}\, r^{-1/2}\, e^{-cr}\,dr\,.
$$
In particular if $\beta=3/2$, then $C= \sqrt{4\pi/c}$.
\end{rem}

We now verify the finiteness of $\kappa_d$ from the statement of Proposition~\ref{thm:D_Ld/2}.
\begin{lem}\label{lem:const}
Let $d\geq 3$. Then,
\begin{align*}
\int_{\Rd\setminus B(0,1)} e^{-(|w|-w\cdot 1)}|w|^{-\beta}dw <\infty\quad \iff \quad \beta>(d+1)/2\,.
\end{align*}
\end{lem}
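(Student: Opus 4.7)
The plan is to introduce coordinates $w=(w_1,\mathbf{w}_2)\in\RR\times\RR^{d-1}$ aligned with the direction of the vector $1$, so that $w\cdot 1=w_1$, and to exploit the identity
$$|w|-w_1\,=\,\frac{|\mathbf{w}_2|^2}{|w|+w_1}\,,\qquad w_1>0\,,$$
which makes explicit the fact that the integrand fails to be exponentially small only inside the paraboloidal region $\{w_1>0,\ |\mathbf{w}_2|\lesssim\sqrt{w_1}\}$. All of the convergence/divergence therefore comes from this paraboloidal piece.

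First I would dispose of the harmless parts of the domain. On $\{w_1\leq 0\}\cap(\Rd\setminus B(0,1))$ one has $|w|-w_1\geq |w|$, and on $\{w_1>0,\ |\mathbf{w}_2|\geq w_1\}$ one checks $|w|+w_1\leq 2\sqrt{2}\,|\mathbf{w}_2|$, hence $|w|-w_1\geq |\mathbf{w}_2|/(2\sqrt{2})\geq c|w|$. On both sets the integrand is dominated by $e^{-c|w|}|w|^{-\beta}$, which integrates to a finite amount over $\{|w|\geq 1\}$ for every real $\beta$. Thus these contributions are irrelevant to the dichotomy. The decisive region is $R=\{w_1\geq 2,\ |\mathbf{w}_2|\leq w_1\}$, on which $|w|\asymp w_1$ and $|w|-w_1\asymp |\mathbf{w}_2|^2/w_1$. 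Applying Fubini and performing the Gaussian integral in $\mathbf{w}_2$,
$$\int_R e^{-(|w|-w_1)}|w|^{-\beta}\,dw\,\asymp\,\int_2^\infty w_1^{-\beta}\int_{\RR^{d-1}}e^{-c|\mathbf{w}_2|^2/w_1}\,d\mathbf{w}_2\,dw_1\,\asymp\,\int_2^\infty w_1^{(d-1)/2-\beta}\,dw_1,$$
which is finite if and only if $\beta>(d+1)/2$, giving the upper bound half of the equivalence.

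For the matching lower bound I would restrict to the smaller strip $\{w_1\geq 2,\ |\mathbf{w}_2|\leq\sqrt{w_1}\}$; there $|w|-w_1\leq 1$ forces $e^{-(|w|-w_1)}\geq e^{-1}$, $|w|\asymp w_1$, and the $\mathbf{w}_2$-slice has volume $\asymp w_1^{(d-1)/2}$, so one recovers the same integrand $w_1^{(d-1)/2-\beta}$ and hence divergence whenever $\beta\leq(d+1)/2$. The only point needing attention is verifying that the Gaussian in $\mathbf{w}_2$ with variance $\asymp w_1$ actually fits inside the truncation $\{|\mathbf{w}_2|\leq w_1\}$, so that extending the inner integral to all of $\RR^{d-1}$ costs nothing; this is automatic since its effective support $|\mathbf{w}_2|\lesssim\sqrt{w_1}$ is much smaller than $w_1$ for $w_1\geq 2$. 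Every remaining step is routine, so no substantial obstacle is expected.
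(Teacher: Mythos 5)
Your proof is correct, and it takes a genuinely different route from the paper's. The paper passes to spherical coordinates, reduces the problem (after restricting to the cone $w\cdot 1\ge |w|/\sqrt 2$) to
$\int_0^{\pi/4} h(\varphi)\,\sin^{d-2}\varphi\,(1-\cos\varphi)^{\beta-d}\,d\varphi$
with $h(\varphi)=\int_{1-\cos\varphi}^\infty e^{-s}s^{d-1-\beta}\,ds$, and then reads off the $\varphi\to 0$ asymptotics of $h$ in three cases depending on the sign of $\beta-d$. You instead slice in Cartesian coordinates, exploit the algebraic identity $|w|-w_1=|\mathbf{w}_2|^2/(|w|+w_1)$, and integrate the resulting Gaussian in the transverse variable $\mathbf{w}_2$ first, which collapses the problem directly to $\int_2^\infty w_1^{(d-1)/2-\beta}\,dw_1$ with no casework. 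Your approach is the more elementary of the two and has the additional merit of being conceptually the same mechanism the paper itself uses in the construction for Proposition~\ref{thm:dist}, where a parabolic region $\{z_1>4,\ |\mathbf z_2|\le\sqrt{z_1}\}$ is exactly what makes $\|V\|_{\Kz}$ blow up; so your argument makes the origin of the exponent $(d+1)/2$ more transparent. One cosmetic point: after you discard $\{w_1\le 0\}$ and $\{w_1>0,\ |\mathbf w_2|\ge w_1\}$, the leftover piece $\{0<w_1<2,\ |\mathbf w_2|<w_1\}\cap\{|w|\ge 1\}$ is a bounded set on which the integrand is bounded, so it too is harmless; you should say so explicitly before declaring $R=\{w_1\ge 2,\ |\mathbf w_2|\le w_1\}$ the decisive region.
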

\begin{proof}
We always have
$$\int_{\{w\in \Rd\setminus B(0,1): \ w\cdot 1\le |w|\sqrt{2}/2\}} e^{-(|w|-w\cdot 1)}|w|^{-\beta}dw<\infty,$$
therefore we only need to characterize the finiteness of the complementary integral.
We will follow the usual notation for spherical coordinates in $\Rd$ \cite{MR1530579}. In particular, $w\cdot 1=r\cos \varphi_1$ and the Jakobian is $r^{d-1}\prod_{k=1}^{d-2} \sin^k(\varphi_{d-1-k})$. We denote $\varphi=\varphi_1$, and we consider
\begin{align*}
&\int_1^{\infty} \int_0^{\pi/4} e^{-r(1-\cos \varphi)} r^{-\beta+d-1}\sin^{d-2}\varphi \,d\varphi\, dr\\
&=\int_0^{\pi/4} h(\varphi) \frac{\sin^{d-2}\varphi }{(1-\cos \varphi)^{d-\beta}}\, d\varphi\,,
\end{align*}
where $h(\varphi)=\int_{1-\cos \varphi}^{\infty} e^{-s} s^{-\beta+d-1}ds$.
If $\beta=d$, then $h(\varphi)\approx 1+|\log(1-\cos\varphi)|$ and $\int_0^{\pi/4}h(\varphi)\sin^{d-2}\varphi d\varphi<\infty$, as needed.
If $\beta>d$, then $h(\varphi)\approx (1-\cos \varphi)^{d-\beta}$, and the integral is finite, too.
If $\beta<d$, then $h(\varphi)\approx 1$ and $\int_0^{\pi/4} \frac{\sin^{d-2}\varphi }{(1-\cos \varphi)^{d-\beta}} d\varphi\approx \int_0^{\pi/4} \varphi^{(d-2)-2(d-\beta)}d\varphi$, which converges if and only if $\beta>(d+1)/2$.
\end{proof}

\newpage
\bibliographystyle{abbrv}
\bibliography{csG}
\end{document}